\newtheorem{thm}{Theorem}[section]
\newtheorem{lem}[thm]{Lemma}
\newtheorem{cor}[thm]{Corollary}
\theoremstyle{definition}
\newtheorem{prob}[thm]{Problem}
\newtheorem*{rem}{Remark}
\newcommand{\abs}[1]{\left\lvert#1\right\rvert} 
\edef\csname f\@Alph\@tempcnta\endcsname{\noexpand\mathfrak{\@Alph\@tempcnta}}
\edef\csname l\@Alph\@tempcnta\endcsname{\noexpand\mathbb{\@Alph\@tempcnta}}
\edef\csname c\@Alph\@tempcnta\endcsname{\noexpand\mathcal{\@Alph\@tempcnta}}
\title{A note on quadratic approximation for Liouville numbers}
\author[T.~Ooto]{Tomohiro Ooto}
\address{Tecnos Data Science Engineering, Inc., 27F., Tokyo Opera City, 3-20-2, Nishishinjuku, Shinjuku-ku, Tokyo, 163-1427, Japan}
\email{ooto.tomohiro@gmail.com}
\subjclass[2010]{11J82; 11J70}
\keywords{Liouville numbers, Diophantine approximation, continued fractions}
\begin{document}

\begin{abstract}
	Schleischitz [arXiv:1701.01129] determined exponents of best approximations to a strong Liouville number by integer polynomials and algebraic numbers of precribed degree.
	In this note, we show that we cannot extend his result to arbitrary Liouville numbers.
\end{abstract}

\maketitle

	
\section{Introduction}\label{sec_introduction}

Let $\xi$ be a real number and $n \geq 1$ be an integer.
We denote by $w_n (\xi)$ the supremum of the real numbers $w$ such that the system
\begin{equation}\label{eq_wnreal}
	H(P) \leq H, \quad 0 < \abs{P(\xi)} \leq H ^ {- w}
\end{equation}
has a solution $P(X) \in \lZ[X]$ of degree at most $n$, for arbitrarily large $H$.
Here, $H(P)$ is defined to be the maximum of the absolute values of the coefficients of $P(X)$.
We denote by $w_n ^ {*} (\xi)$ the supremum of the real numbers $w ^ {*}$ such that the system
\begin{equation}\label{eq_wn*real}
	H(\alpha) \leq H, \quad 0 < \abs{\xi - \alpha} \leq H(\alpha) ^ {- 1} H ^ {- w}
\end{equation}
has a solution real algebraic number $\alpha$ of degree at most $n$, for arbitrarily large $H$.
Here, $H(\alpha)$ is equal to $H(P)$, where $P(X)$ is the minimal polynomial of $\alpha$ over $\lZ$.
These functions $w_n$ and $w_n ^ {*}$ are called \textit{Diophantine exponents} and introduced by Mahler \cite{Mahler32} and Koksma \cite{Koksma39}, respectively.
Recently, Schleischitz \cite{Schleischitz17} introduced new Diophantine exponents $w_{= n}$ and $w_{= n} ^ {*}$.
Let $w_{= n} (\xi)$ be the supremum of the real numbers $w$ such that \eqref{eq_wnreal} has an irreducible solution $P(X) \in \lZ[X]$ of degree $n$, for arbitrarily large $H$.
Similarly, let $w_{= n} ^ {*} (\xi)$ be the supremum of the real numbers $w ^ {*}$ such that \eqref{eq_wn*real} has a solution real algebraic number $\alpha$ of degree $n$, for arbitrarily large $H$.
It is obvious that, for any real number $\xi$,
\begin{equation}\label{eq_w_1}
	w_1(\xi) = w_1 ^{*}(\xi) = w_{= 1} (\xi) = w_{= 1} ^ {*} (\xi).
\end{equation}

We recall that a real number $\xi$ is called a \textit{Liouville number} if $w_1(\xi) = + \infty$.
Let $[a_0, a_1, a_2, \ldots ]$ be the continued fraction of $\xi$.
We recall that $\xi$ is called a \textit{strong Liouville number} introduced by LeVeque \cite{LeVeque53} if
\begin{equation*}
	\lim_{n \rightarrow \infty} \frac{\log a_{n + 1}}{\log a_n} = + \infty.
\end{equation*}

Schleischitz \cite{Schleischitz17} investigated strong Liouville numbers as follows.

\begin{thm}\cite[Corollary 3.12]{Schleischitz17}
	Let $\xi$ be a strong Liouville number.
	Then we have
	\begin{equation*}
		w_{= n} (\xi) = w_{= n} ^ {*} (\xi) = n
	\end{equation*}
	for any integer $n \geq 2$.
\end{thm}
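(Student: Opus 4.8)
The plan is to split the assertion into the upper bounds $w_{=n}(\xi)\le n$, $w_{=n}^{*}(\xi)\le n$ (which carry the content of the theorem) and the matching lower bounds, and to rest everything on one structural consequence of the hypothesis. Write the continued fraction of $\xi$ as $[a_0,a_1,a_2,\ldots]$ with convergents $p_k/q_k$, and set $L_k:=(\log q_{k+1})/(\log q_k)$. I first record that $L_k\to+\infty$: since $q_k\ge a_k$ and $\log q_k\le\sum_{j\le k}\log(a_j+1)$, the hypothesis $\log a_{k+1}/\log a_k\to\infty$ makes the last factor dominate the whole product, so $\log q_k\sim\log a_k$ and hence $L_k\sim\log a_{k+1}/\log a_k\to\infty$; equivalently $q_{k+1}=q_k^{\,L_k}$ and $q_{k+2}=q_{k+1}^{\,L_{k+1}}$ with $L_k,L_{k+1}\to\infty$, while $\abs{\xi-p_k/q_k}<1/(q_kq_{k+1})$. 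Replacing $\xi$ by $\xi-\lfloor\xi\rfloor$, which leaves all four exponents and the hypothesis unchanged, I may assume $\xi\in(0,1)$, so that $\abs{P(\xi)-P(p/q)}\le n^2\,H(P)\,\abs{\xi-p/q}$ for every $p/q\in(0,1)$ and every $P\in\lZ[X]$ of degree $\le n$.

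For $w_{=n}(\xi)\le n$, fix an irreducible $P\in\lZ[X]$ of degree $n\ge2$ and put $H=H(P)$. Because $n\ge2$, $P$ has no rational root, so $q^nP(p/q)$ is a nonzero integer and $\abs{P(p/q)}\ge q^{-n}$ for all $p/q$; this is the only use of $n\ge2$, and it is why the statement fails for $n=1$. Let $k$ be largest with $q_k\le H$ and write $H=q_k^{\,\theta}$, so $1\le\theta<L_k$. If $\theta\le L_k-n$, compare $P(\xi)$ with $P(p_k/q_k)$: the error is $\le n^2H/(q_kq_{k+1})=n^2q_k^{\,\theta-1-L_k}\le n^2q_k^{-n-1}<\tfrac12q_k^{-n}$ for $H$ large, so $\abs{P(\xi)}\ge\tfrac12q_k^{-n}\ge\tfrac12H^{-n}$. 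If instead $\theta>L_k-n$, compare $P(\xi)$ with $P(p_{k+1}/q_{k+1})$: using $H<q_{k+1}$ the error is $<n^2H/(q_{k+1}q_{k+2})<n^2q_{k+1}^{-L_{k+1}}<\tfrac12q_{k+1}^{-n}$ for $H$ large (here $L_{k+1}\to\infty$), so $\abs{P(\xi)}\ge\tfrac12q_{k+1}^{-n}$, and since $\log q_{k+1}/\log H=L_k/\theta<L_k/(L_k-n)\to1$ as $k\to\infty$ this gives $\abs{P(\xi)}\ge\tfrac12H^{-n(1+o(1))}$. In either case, for every $\varepsilon>0$ and all large $H$, every irreducible $P$ of degree $n$ with $H(P)\le H$ has $\abs{P(\xi)}\ge H^{-n-\varepsilon}$ (and $P(\xi)\ne0$ since $\xi$ is transcendental), which is exactly $w_{=n}(\xi)\le n$. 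The bound $w_{=n}^{*}(\xi)\le n$ then follows: if $\alpha$ is real algebraic of degree $n$ with minimal polynomial $P$ over $\lZ$, then $\abs{\xi-\alpha}\le1$ forces $\abs{\alpha}<2$ and $\abs{P(\xi)}=\abs{P(\xi)-P(\alpha)}\le c'(n)\,H(\alpha)\,\abs{\xi-\alpha}$, so a solution of \eqref{eq_wn*real} of exponent $w$ yields $\abs{P(\xi)}\le c'(n)H^{-w}$, and the previous paragraph forces $w\le n+o(1)$.

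For the lower bounds I would show $w_{=n}^{*}(\xi)\ge n$ and then deduce $w_{=n}(\xi)\ge w_{=n}^{*}(\xi)$ by the same minimal-polynomial passage (the minimal polynomial of a degree-$n$ algebraic number realising exponent $w$ in \eqref{eq_wn*real} realises exponent $\ge w-\varepsilon$ in \eqref{eq_wnreal}). To get $w_{=n}^{*}(\xi)\ge n$, fix a large $k$ and apply Minkowski's theorem to the body $\{\abs{a_i}\le q_k,\ \abs{\sum_{i=0}^n a_i(p_k/q_k)^i}\le c\,q_k^{-n}\}$ in $\lR^{n+1}$: for suitable $c=c(n)$ this produces a nonzero $Q\in\lZ[X]$ with $H(Q)\le q_k$ and $\abs{Q(p_k/q_k)}\le c\,q_k^{-n}$, hence (as $\abs{\xi-p_k/q_k}<q_{k+1}^{-1}\ll q_k^{-n}$) also $\abs{Q(\xi)}\ll q_k^{-n}$, and $Q$ has a real root $\alpha$ within $\ll q_k^{-n}/\abs{Q'(\text{near }p_k/q_k)}$ of $\xi$. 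The point I expect to be the main obstacle is upgrading this $Q$ to an \emph{irreducible} polynomial of degree \emph{exactly} $n$ whose derivative near $p_k/q_k$ has the generic size $\asymp q_k$ (so that $\alpha$ has degree $n$ and $\abs{\xi-\alpha}\ll q_k^{-n-1}$): a dimension count does not separate $Q$ from the reducible polynomials of comparable height, so one must build $Q$ by hand — e.g. prescribing a coefficient to be an appropriate prime, or working inside an explicit rank-$n$ sublattice of $\lZ^{n+1}$ and perturbing by a multiple of $q_kX-p_k$ — at the harmless cost of enlarging $H(Q)$ to $q_k^{1+o(1)}$. Granting this, $\abs{\xi-\alpha}\le H(\alpha)^{-n-1+o(1)}$ with $H(\alpha)$ arbitrarily large, giving $w_{=n}^{*}(\xi)\ge n$; combined with the two upper bounds, $w_{=n}(\xi)=w_{=n}^{*}(\xi)=n$.
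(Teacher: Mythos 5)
This theorem is not proved in the paper at all: it is quoted from Schleischitz (Corollary 3.12 of arXiv:1701.01129), so there is no internal proof to compare with, and your attempt has to stand on its own. The upper-bound half does: for a strong Liouville number one indeed has $\log q_{k+1}/\log q_k\to\infty$, an irreducible $P\in\lZ[X]$ of degree $n\ge 2$ has no rational zero, so $\abs{P(p/q)}\ge q^{-n}$ at every rational, and your two-case comparison of $P(\xi)$ with $P(p_k/q_k)$ (when $H(P)\le q_k^{L_k-n}$) or with $P(p_{k+1}/q_{k+1})$ (otherwise) correctly yields $\abs{P(\xi)}\ge H(P)^{-n-\varepsilon}$ for all irreducible degree-$n$ polynomials of large height; passing to $w_{=n}^*(\xi)\le n$ via the minimal polynomial is the standard direction (it is Lemma \ref{lem_new_exponent_hikaku} here). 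Up to routine bookkeeping (small heights handled by transcendence of $\xi$), that half is fine.

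The genuine gap is the lower bound $w_{=n}^*(\xi)\ge n$, and it sits exactly at the step you flag and then assume ("Granting this\dots"). The Minkowski/Dirichlet step only produces some nonzero $Q$ with $H(Q)\le q_k$ and $\abs{Q(p_k/q_k)}\ll q_k^{-n}$; the convex body you use contains degenerate lattice points such as integer multiples of $(q_kX-p_k)X^{n-1}$, whose nearby root is the rational $p_k/q_k$ itself, and nothing in the argument excludes that all short vectors are of this reducible or low-degree type. Moreover you have no lower bound on $\abs{Q'}$ near $p_k/q_k$, so neither the existence of a real root at distance $\ll q_k^{-n-1}$ from $\xi$ nor the fact that this root has degree exactly $n$ is established; the proposed repairs (a prime coefficient, perturbing by a multiple of $q_kX-p_k$) are not carried out and are not obviously compatible with keeping both the smallness of $Q(\xi)$ and the height $q_k^{1+o(1)}$. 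This is not a removable technicality: for a general transcendental number the known lower bounds for approximation by algebraic numbers of degree exactly $n$ are of size roughly $n/2$, so getting $n$ here must use the Liouville structure through a genuine construction — this is precisely the content of Schleischitz's cited result that your sketch does not reproduce. Note also that the device used in the present paper for its own lower bounds (quadratic irrationals whose eventually periodic continued fraction agrees with that of $\xi$ on a long prefix, as in the proof of Theorem \ref{thm_main}) is special to $n=2$ and offers no way around this for $n\ge 3$. As it stands, your proposal establishes only the inequalities $w_{=n}(\xi)\le n$ and $w_{=n}^*(\xi)\le n$.
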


He cannot expect Theorem \ref{thm_main} to extend for arbitrary Liouville numbers (see \cite[p.~12]{Schleischitz17}).
In this note, we answer this question using by the theory of continued fractions.

We recall quasi-periodic continued fractions.
Let $(a_n)_{n \geq 0}$ be a non-ultimately periodic sequence of positive integers.
Let $(n_k)_{k \geq 0}$ be an increasing sequence of positive integers and $(\lambda_k)_{k \geq 0}, (r_k)_{k \geq 0}$ be sequences of positive integers.
Assume that, for any integer $k \geq 0$, we have $n_{k + 1} \geq n_k + \lambda_k r_k$ and
\begin{equation*}
	a_{m + r_k} = a_m\ \text{for}\ n_k \leq m \leq n_k + (\lambda_k - 1) r_k -1.
\end{equation*}
Then, $\xi = [a_0, a_1, a_2, \ldots ]$ is called a \textit{quasi-periodic continued fraction}.
A transcendence and transcendence measures of quasi-periodic continued fractions is studied by many authors (see e.g.\ \cite{Adamczewski07, Baker62, Bugeaud12, Maillet06}).

Throughout this note, for integers $n, m \geq 1$, $\overline{a_1, \ldots, a_m} ^ n$ (resp.\ $\overline{a_1, \ldots, a_m}$) means that the $m$ letters $a_1, \ldots, a_m$ is repeated $n$ times (resp.\ infinitely many times).

Our main theorem is the following.

\begin{thm}\label{thm_main}
	Let $\boldsymbol{b} = (b_n)_{n \geq 0}$ and $\boldsymbol{\lambda} = (\lambda_n)_{n \geq 0}$ be sequences of positive integers.
	We consider a quasi-periodic continued fraction
	\begin{equation*}
		\xi (\boldsymbol{b}, \boldsymbol{\lambda}) 
		= [0, \overline{b_0} ^ {\lambda_0}, \overline{b_1} ^ {\lambda_1}, \ldots].
	\end{equation*}
	Assume that
	\begin{equation}\label{eq_main_cond}
		\lim_{n \rightarrow \infty} b_n
		= \lim_{n \rightarrow \infty} \lambda_n
		= \lim_{n \rightarrow \infty} \frac{\log b_{n + 1}}{\sum_{k = 0}^{n} \lambda_k \log b_k}
		= + \infty.
	\end{equation}
	Then $\xi (\boldsymbol{b}, \boldsymbol{\lambda})$ is a Liouville number and satisfies
	\begin{equation*}
		w_{= 2} (\xi (\boldsymbol{b}, \boldsymbol{\lambda}))
		= w_{= 2} ^ {*} (\xi (\boldsymbol{b}, \boldsymbol{\lambda}))
		= + \infty.
	\end{equation*}
\end{thm}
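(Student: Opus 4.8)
The plan is to produce, for every large $n$, an explicit quadratic irrational $\alpha_n$ that is extraordinarily close to $\xi := \xi(\boldsymbol{b}, \boldsymbol{\lambda})$ relative to its height, and to read off from it both that $\xi$ is a Liouville number and that $w_{=2}(\xi) = w_{=2}^{*}(\xi) = +\infty$. Write $\Lambda_n = \lambda_0 + \cdots + \lambda_n$ (and $\Lambda_{-1} = 0$), so that in $\xi = [0; a_1, a_2, \ldots]$ the partial quotients at positions $\Lambda_{n-1} + 1, \ldots, \Lambda_n$ all equal $b_n$; let $(p_m/q_m)_{m \geq 0}$ denote the convergents of $\xi$ and set $L_n = \sum_{k=0}^{n} \lambda_k \log b_k$. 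Since $b_n, \lambda_n \to +\infty$, at most finitely many $a_i$ equal $1$, so the classical bounds $\prod_{i \leq m} a_i \leq q_m \leq \prod_{i \leq m}(a_i + 1)$ give $L_n \leq \log q_{\Lambda_n} \leq 2 L_n + O(1)$; and the third limit in \eqref{eq_main_cond}, with $n$ replaced by $n-1$, gives $L_{n-1} = o(\log b_n)$. These two facts are used repeatedly.

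First I would dispose of the Liouville property. At position $\Lambda_n$ the next partial quotient is $a_{\Lambda_n + 1} = b_{n+1}$, whence $\abs{\xi - p_{\Lambda_n}/q_{\Lambda_n}} \asymp (b_{n+1} q_{\Lambda_n}^2)^{-1}$ and therefore
\[
	\frac{-\log \abs{\xi - p_{\Lambda_n}/q_{\Lambda_n}}}{\log q_{\Lambda_n}} = 2 + \frac{\log b_{n+1}}{\log q_{\Lambda_n}} \geq 2 + \frac{\log b_{n+1}}{2 L_n + O(1)} \longrightarrow +\infty
\]
by \eqref{eq_main_cond}. Hence $w_1(\xi) = +\infty$, so $\xi$ is a Liouville number and in particular transcendental.

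Now the heart of the argument. For $n$ large put
\[
	\alpha_n = [0; \overline{b_0}^{\lambda_0}, \ldots, \overline{b_{n-1}}^{\lambda_{n-1}}, \overline{b_n}],
\]
an eventually periodic continued fraction, hence a real algebraic number of degree exactly $2$. The point is that $\alpha_n$ shares its first $\Lambda_n$ partial quotients with $\xi$ — which makes the two very close — while the run of $b_n$'s in the tail of $\alpha_n$ is accounted for by periodicity and so contributes almost nothing to $H(\alpha_n)$. Quantitatively, the standard identity
\[
	\abs{\xi - \alpha_n} = \frac{\abs{\theta - \theta'}}{(q_{\Lambda_n}\theta + q_{\Lambda_n - 1})(q_{\Lambda_n}\theta' + q_{\Lambda_n - 1})},
\]
where $\theta = [b_{n+1}; a_{\Lambda_n + 2}, \ldots]$ and $\theta' = [\overline{b_n}]$ are the complete quotients of $\xi$ and of $\alpha_n$ at position $\Lambda_n$, together with $\theta \asymp b_{n+1}$, $\theta' \asymp b_n$ and $\abs{\theta - \theta'} \asymp b_{n+1}$ (valid for large $n$ because \eqref{eq_main_cond} forces $\log b_{n+1}/\log b_n \to +\infty$), yields $\abs{\xi - \alpha_n} \asymp (b_n q_{\Lambda_n}^2)^{-1}$. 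On the other hand, writing $\alpha_n = (p_{\Lambda_{n-1}}\beta + p_{\Lambda_{n-1} - 1})/(q_{\Lambda_{n-1}}\beta + q_{\Lambda_{n-1} - 1})$ with $\beta = [\overline{b_n}]$ a root of $X^2 - b_n X - 1$, a direct computation produces an integer quadratic polynomial vanishing at $\alpha_n$ all of whose coefficients are $O(b_n q_{\Lambda_{n-1}}^{2})$ in absolute value, so $\log H(\alpha_n) \leq \log b_n + O(\log q_{\Lambda_{n-1}}) = (1 + o(1)) \log b_n$.

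Finally I would combine the estimates. Using $\log q_{\Lambda_n} \geq L_n \geq \lambda_n \log b_n$ we get
\[
	\frac{-\log \abs{\xi - \alpha_n}}{\log H(\alpha_n)} \geq \frac{2 \log q_{\Lambda_n} - O(1)}{(1 + o(1)) \log b_n} \geq \frac{2\lambda_n \log b_n - O(1)}{(1 + o(1)) \log b_n} \longrightarrow +\infty ,
\]
and since the $\alpha_n$ are pairwise distinct, $H(\alpha_n) \to +\infty$; taking $H = H(\alpha_n)$ in \eqref{eq_wn*real} then shows $w_{=2}^{*}(\xi) = +\infty$. For $w_{=2}$, let $P_n \in \lZ[X]$ be the minimal polynomial of $\alpha_n$ — irreducible of degree $2$ with $H(P_n) = H(\alpha_n)$ — and apply the mean value theorem: $0 < \abs{P_n(\xi)} = \abs{\xi - \alpha_n}\, \abs{P_n'(\zeta_n)} \ll H(P_n)\, \abs{\xi - \alpha_n}$; since $\abs{\xi - \alpha_n} = H(P_n)^{-\rho_n}$ with $\rho_n \to +\infty$ by the previous display, this is $\leq H(P_n)^{-w}$ for every fixed $w$ and all large $n$, so \eqref{eq_wnreal} (with $n=2$ and $H = H(P_n)$) forces $w_{=2}(\xi) = +\infty$ as well. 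The step I expect to be delicate is the height estimate for $\alpha_n$ combined with the verification that the displayed ratio diverges: this is precisely where the third limit in \eqref{eq_main_cond} is indispensable, for it guarantees that the pre-period contributes only $O(L_{n-1}) = o(\log b_n)$ to $\log H(\alpha_n)$, whereas the entire block of $\lambda_n$ repetitions of $b_n$ still contributes in full to $\log q_{\Lambda_n}$ and hence to $-\log\abs{\xi - \alpha_n}$.
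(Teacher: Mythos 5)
Your proposal is correct and follows essentially the same route as the paper: the same quadratic approximants $[0,\overline{b_0}^{\lambda_0},\ldots,\overline{b_{n-1}}^{\lambda_{n-1}},\overline{b_n}]$ (the paper's $\alpha_{n-1}$, up to reindexing), with the height controlled by the pre-period denominators plus $\log b_n$ via the third condition in \eqref{eq_main_cond}, the closeness controlled by the $\lambda_n$ shared partial quotients $b_n$, and divergence of the exponent from $\lambda_n\to\infty$. Your only deviations are cosmetic: you bound the height by an explicit polynomial computation instead of the lemma $H(\alpha)\leq 2q_rq_{r+s}$, use the complete-quotient identity instead of the simple coincidence-of-partial-quotients lemma, and deduce $w_{=2}=\infty$ directly by the mean value theorem rather than via $w_{=2}^{*}\leq w_{=2}$.
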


\begin{rem}
	Let $\xi (\boldsymbol{b}, \boldsymbol{\lambda}) = [0, a_1, a_2, \ldots ]$ be a continued fraction as in Theorem \ref{thm_main}.
	It is easily seen that
	\begin{equation*}
		\liminf_{n \rightarrow \infty} \frac{\log a_{n + 1}}{\log a_n} \leq 1.
	\end{equation*}
	Therefore, $\xi (\boldsymbol{b}, \boldsymbol{\lambda})$ is not a strong Liouville number.
\end{rem}

\begin{cor}\label{cor_main}
	There exist uncountablly many Liouville numbers $\xi$ such that
	\begin{equation*}
		w_{= 2} (\xi) = w_{= 2} ^ {*} (\xi) = + \infty .
	\end{equation*}
\end{cor}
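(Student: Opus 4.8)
The plan is to derive Corollary~\ref{cor_main} from Theorem~\ref{thm_main} by producing, for one fixed admissible sequence $\boldsymbol{\lambda}$, a family of sequences $\boldsymbol{b}$ indexed by $\{0,1\}^{\mathbb{N}}$ for which the hypotheses of Theorem~\ref{thm_main} hold and whose resulting continued fractions are pairwise distinct. First I would set $\lambda_n = n+1$, so that $\lambda_n \to \infty$ automatically, and construct recursively a strictly increasing sequence of integers $(c_n)_{n \geq 0}$ with $c_0 \geq 2$ and
\[
	\log c_{n+1}\ \geq\ (n+1)\sum_{k=0}^{n}\lambda_k\log c_k\qquad(n\geq 0),
\]
which forces $c_n \to \infty$ and $\dfrac{\log c_{n+1}}{\sum_{k=0}^{n}\lambda_k\log c_k}\geq n+1\to\infty$.

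Next, for each $\boldsymbol{\epsilon} = (\epsilon_n)_{n\geq 0} \in \{0,1\}^{\mathbb{N}}$ I would put $\boldsymbol{b}^{(\boldsymbol{\epsilon})} = (c_n + \epsilon_n)_{n \geq 0}$. Since $c_n \geq 2$ and $0 \leq \epsilon_n \leq 1$ we have $\log c_k \leq \log(c_k+\epsilon_k) \leq 2\log c_k$, whence
\[
	\frac{\log b^{(\boldsymbol{\epsilon})}_{n+1}}{\sum_{k=0}^{n}\lambda_k\log b^{(\boldsymbol{\epsilon})}_k}\ \geq\ \frac{\log c_{n+1}}{2\sum_{k=0}^{n}\lambda_k\log c_k}\ \geq\ \frac{n+1}{2},
\]
which tends to $+\infty$; together with $b^{(\boldsymbol{\epsilon})}_n \to \infty$ and $\lambda_n \to \infty$ this shows that $(\boldsymbol{b}^{(\boldsymbol{\epsilon})},\boldsymbol{\lambda})$ satisfies \eqref{eq_main_cond}. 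Hence, by Theorem~\ref{thm_main}, $\xi(\boldsymbol{b}^{(\boldsymbol{\epsilon})},\boldsymbol{\lambda})$ is a Liouville number with $w_{=2}(\xi(\boldsymbol{b}^{(\boldsymbol{\epsilon})},\boldsymbol{\lambda})) = w_{=2}^{*}(\xi(\boldsymbol{b}^{(\boldsymbol{\epsilon})},\boldsymbol{\lambda})) = +\infty$.

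It then remains to check that $\boldsymbol{\epsilon}\mapsto\xi(\boldsymbol{b}^{(\boldsymbol{\epsilon})},\boldsymbol{\lambda})$ is injective. Because $\boldsymbol{\lambda}$ is the same for every $\boldsymbol{\epsilon}$, the positions of the block boundaries in the partial quotient sequence of $\xi(\boldsymbol{b}^{(\boldsymbol{\epsilon})},\boldsymbol{\lambda}) = [0,\overline{b^{(\boldsymbol{\epsilon})}_0}^{\lambda_0},\overline{b^{(\boldsymbol{\epsilon})}_1}^{\lambda_1},\ldots]$ do not depend on $\boldsymbol{\epsilon}$: the partial quotient in position $\lambda_0+\cdots+\lambda_{m-1}+1$ is exactly $b^{(\boldsymbol{\epsilon})}_m = c_m+\epsilon_m$. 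So $\boldsymbol{b}^{(\boldsymbol{\epsilon})}$, and therefore $\boldsymbol{\epsilon}$, can be read off from the partial quotient sequence; since a Liouville number is irrational and so has a unique continued fraction expansion, distinct $\boldsymbol{\epsilon}$ give distinct real numbers. As $\{0,1\}^{\mathbb{N}}$ has the cardinality of the continuum, this produces uncountably many Liouville numbers $\xi$ with $w_{=2}(\xi) = w_{=2}^{*}(\xi) = +\infty$, which is the corollary. I expect no serious obstacle here: the recursive construction of $(c_n)$ and the verification of \eqref{eq_main_cond} are routine, and the only mildly delicate point — injectivity of the parametrization — is dealt with cleanly by keeping $\boldsymbol{\lambda}$ fixed, so that the block boundaries (hence the values $b_m$) are determined by the partial quotient sequence regardless of any coincidences between neighbouring block values.
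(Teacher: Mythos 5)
Your proposal is correct and follows essentially the same route as the paper: fix a linearly growing $\boldsymbol{\lambda}$, produce an uncountable family of admissible sequences $\boldsymbol{b}$ indexed by binary sequences, apply Theorem~\ref{thm_main}, and get injectivity from the first differing partial quotient together with uniqueness of continued fraction expansions. The only difference is cosmetic: you perturb a fixed rapidly growing sequence additively via $c_n+\epsilon_n$ with $\epsilon_n\in\{0,1\}$, whereas the paper encodes the binary choice multiplicatively as an exponent $\varepsilon_n\in\{1,2\}$ in a recursive product defining $b_n$.
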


We address the following problem arising from Corollary \ref{cor_main}.

\begin{prob}
	Does there exists a Liouville number $\xi$ such that
	\begin{equation*}
		w_{= n} (\xi) = w_{= n} ^ {*} (\xi) = + \infty
	\end{equation*}
	for any integer $n \geq 2$?
\end{prob}

This paper is organized as follows.
In Section \ref{sec_pre}, we prepare some lemmas for the proof of the main results.
In Section \ref{sec_proofs}, we prove Theorem \ref{thm_main} and Corollary \ref{cor_main}.

\section{Preliminaries}\label{sec_pre}

Let $\xi = [a_0, a_1, a_2, \ldots]$ be a real number.
We define sequences $(p_n)_{n \geq - 1}$ and $(q_n)_{n \geq - 1}$ by
\begin{equation*}
	\begin{cases}
		p_{- 1} = 1,\ p_0 = a_0,\ p_n = a_n p_{n - 1} + p_{n - 2},\ n \geq 1,\\
		q_{- 1} = 0,\ q_0 = 1,\ q_n = a_n q_{n - 1} + q_{n - 2},\ n\geq 1.
	\end{cases}
\end{equation*}
We call $(p_n / q_n)_{n \geq 0}$ \textit{the convergent sequence of $\xi$}.

The following lemma is well-known result.

\begin{lem}\label{lem_rat_app_koushiki}
	Let $\xi = [a_0, a_1, a_2, \ldots ]$ be a real number and $(p_n / q_n)_{n \geq 0}$ be the convergent sequence of $\xi$.
	Then we have
	\begin{equation*}
		w_1 (\xi) 
		= 1 + \limsup_{n \rightarrow \infty} \frac{\log a_{n + 1}}{\log q_n}.
	\end{equation*}
\end{lem}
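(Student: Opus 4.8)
The plan is to reduce the defining system for $w_1$ to ordinary rational approximation and then to play an arbitrary linear form against the convergents, the single nontrivial input being the best-approximation property of continued fractions. First I would record the two estimates that drive everything. From the standard identity $q_n\xi - p_n = (-1)^n/(q_n\xi_{n+1} + q_{n-1})$, where $\xi_{n+1} = [a_{n+1}, a_{n+2}, \ldots]$ is the complete quotient satisfying $a_{n+1} < \xi_{n+1} < a_{n+1}+1$, together with $q_{n+1} = a_{n+1}q_n + q_{n-1}$, one obtains
\[
	\frac{1}{q_{n+1}+q_n} \le \abs{q_n\xi - p_n} \le \frac{1}{q_{n+1}}.
\]
Since $q_{n-1} \le q_n$ and $a_{n+1} \ge 1$, this gives $\log q_{n+1} = \log a_{n+1} + \log q_n + O(1)$, hence $-\log\abs{q_n\xi - p_n} = \log a_{n+1} + \log q_n + O(1)$. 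As $p_n/q_n \to \xi \ne 0$ we also have $\log\abs{p_n} = \log q_n + O(1)$, so the polynomial $P_n(X) = q_n X - p_n \in \lZ[X]$ satisfies $\log H(P_n) = \log q_n + O(1)$.

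For the lower bound, fix any $w < 1 + \limsup_{n} (\log a_{n+1})/\log q_n$ and choose a subsequence $(n_j)$ along which $(\log a_{n_j+1})/\log q_{n_j}$ tends to this $\limsup$. Taking $P = P_{n_j}$ and $H = H(P_{n_j})$, the estimates above give $-\log\abs{P_{n_j}(\xi)} \ge \log a_{n_j+1} + \log q_{n_j}$, while $w\log H = w\log q_{n_j} + O(1)$; dividing by $\log q_{n_j}$ and letting $j \to \infty$ shows $\abs{P_{n_j}(\xi)} \le H^{-w}$ for all large $j$. Since $H(P_{n_j}) \to \infty$, the system \eqref{eq_wnreal} (with degree at most $1$) has a solution for arbitrarily large $H$, so $w \le w_1(\xi)$; letting $w$ approach the $\limsup$ yields the lower bound, and the same computation gives $w_1(\xi) = +\infty$ when the $\limsup$ is infinite.

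For the upper bound, take an arbitrary $P(X) = aX + b \ne 0$ of degree at most $1$ with $H(P) \le H$ and $0 < \abs{P(\xi)} \le H^{-w}$. The constant case is trivial, since then $\abs{P(\xi)} \ge 1$; so assume $a \ne 0$ and write $\abs{P(\xi)} = \abs{q\xi - p}$ with $q = \abs{a} \ge 1$ and $\abs{p} = \abs{b}$, whence $\max(q, \abs{p}) = H(P)$. Let $n$ be maximal with $q_n \le q$, so that $q < q_{n+1}$; the best-approximation property of convergents then gives $\abs{q\xi - p} \ge \abs{q_n\xi - p_n} \ge 1/(q_{n+1}+q_n)$, and therefore $-\log\abs{P(\xi)} \le \log a_{n+1} + \log q_n + O(1)$. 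On the other hand $-\log\abs{P(\xi)} \ge w\log H \ge w\log q \ge w\log q_n$. Combining and dividing by $\log q_n$ gives $w \le 1 + (\log a_{n+1})/\log q_n + o(1)$. Since solutions exist for arbitrarily large $H$, the corresponding values of $q$, hence of $n$, tend to infinity; taking the $\limsup$ over this sequence of indices yields $w \le 1 + \limsup_{n}(\log a_{n+1})/\log q_n$, which together with the lower bound proves the claim.

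The main obstacle is the upper bound, where the essential ingredient is the best-approximation property: for $1 \le q < q_{n+1}$ one has $\min_p \abs{q\xi - p} \ge \abs{q_n\xi - p_n}$. This is the one genuinely continued-fraction-specific fact, and it is what forces every good linear form to behave essentially like a convergent. The rest is careful bookkeeping: tracking the additive $O(1)$ and multiplicative $o(1)$ error terms through the logarithms, and ensuring that the auxiliary index $n$ is driven to infinity by the hypothesis that solutions exist for arbitrarily large $H$, so that the per-solution bounds can be aggregated into a $\limsup$.
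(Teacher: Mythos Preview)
Your argument is correct. The paper itself offers no proof of this lemma; it is simply labeled ``well-known'' and stated without justification, so there is nothing to compare against at the level of strategy. What you have written is the standard derivation: use $\abs{q_n\xi-p_n}\asymp 1/q_{n+1}$ together with $q_{n+1}=a_{n+1}q_n+q_{n-1}$ to handle the convergents, and invoke the best-approximation property of continued fractions to reduce an arbitrary linear form to a convergent for the upper bound.

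One small point worth tightening: when you say ``the corresponding values of $q$, hence of $n$, tend to infinity'', the implication $H\to\infty\Rightarrow q\to\infty$ is not automatic from $H(P)\le H$ alone. The clean way is to note that $\abs{P(\xi)}\le H^{-w}\to 0$ (for $w>0$), while your inequality $\abs{P(\xi)}\ge\abs{q_n\xi-p_n}\ge 1/(q_{n+1}+q_n)$ forces $q_{n+1}\to\infty$, hence $n\to\infty$; you already have all the pieces, so this is only a matter of phrasing. With that adjustment the proof is complete and self-contained.
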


\begin{lem}\label{lem_new_exponent_hikaku}
	Let $\xi$ be a real number and $n \geq 1$ be an integer.
	Then we have
	\begin{equation*}
		w_{= n} ^ {*} (\xi) \leq w_{= n} (\xi).
	\end{equation*}
\end{lem}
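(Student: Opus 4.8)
The statement to prove is $w_{=n}^{*}(\xi) \le w_{=n}(\xi)$ for every real $\xi$ and integer $n \ge 1$.

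The plan is to extract, from any real algebraic number $\alpha$ of degree exactly $n$ witnessing a value close to $w_{=n}^{*}(\xi)$, an irreducible integer polynomial of degree $n$ witnessing a comparable value for $w_{=n}$. The natural candidate is of course the minimal polynomial $P(X) \in \lZ[X]$ of $\alpha$, which is irreducible of degree $n$ with $H(P) = H(\alpha)$ by definition. So the whole content is to convert the bound $\abs{\xi - \alpha} \le H(\alpha)^{-1} H^{-w^{*}}$ into a bound of the form $\abs{P(\xi)} \le H^{-w}$ with $w$ not much smaller than $w^{*}$.

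First I would write $P(X) = a_n \prod_{i=1}^{n}(X - \alpha_i)$ with $\alpha = \alpha_1$, so that $P(\xi) = a_n (\xi - \alpha_1) \prod_{i=2}^{n}(\xi - \alpha_i)$. Taking absolute values, $\abs{P(\xi)} \le \abs{a_n}\, \abs{\xi - \alpha}\, \prod_{i=2}^{n}\abs{\xi - \alpha_i}$. Here $\abs{a_n} \le H(P) = H(\alpha) \le H$, and each $\abs{\xi - \alpha_i} \le \abs{\xi} + \abs{\alpha_i}$; since the $\alpha_i$ are conjugates of an algebraic number of height $H(\alpha)$, Mahler's bound on the size of conjugates gives $\abs{\alpha_i} \le n\, H(\alpha) \le n H$ (or any similar elementary estimate). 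Hence $\prod_{i=2}^{n}\abs{\xi - \alpha_i} \ll_{\xi,n} H^{n-1}$. Combining, $\abs{P(\xi)} \ll_{\xi,n} H \cdot H(\alpha)^{-1} H^{-w^{*}} \cdot H^{n-1} \le c(\xi,n)\, H^{n - w^{*}}$, using $H(\alpha) \ge 1$. Also $\abs{P(\xi)} \ne 0$ because $P$ is irreducible of degree $n \ge 1$ and $\xi$ is... well, if $\xi$ were algebraic of degree $< n$ we would need a separate remark, but actually if $\xi$ is a root of $P$ then $\alpha$ would be a conjugate of $\xi$; this can be handled by noting that then $\xi - \alpha$ need not be small, or more cleanly by observing the inequality is trivial when the relevant $w_{=n}$ is infinite or by restricting to the generic situation — I would simply note $P(\xi) \ne 0$ when $\xi \notin \overline{\lQ}$ or deg $\xi \ge n$, and dispatch the finitely many algebraic exceptions separately.

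Finally, I would absorb the constant: given $w^{*} < w_{=n}^{*}(\xi)$ and any $\varepsilon > 0$, for all sufficiently large $H$ we have $c(\xi,n) H^{n - w^{*}} \le H^{n - w^{*} + \varepsilon} = H^{-(w^{*} - n - \varepsilon + n)}$... more precisely we set $w = w^{*} - \varepsilon$ so that for $H$ large, $c(\xi,n) H^{n-w^{*}} \le H^{n - w}$ fails to be the right shape — let me instead phrase it as: the polynomial $P$ satisfies $H(P) \le H$ and $\abs{P(\xi)} \le H^{-w}$ where $H^{-w} := c(\xi,n) H^{n - w^{*}}$, and since $c(\xi,n) H^{n-w^{*}} \le H^{-(w^{*} - n - \varepsilon)}$ for $H$ large, we get an admissible $w \ge w^{*} - n - \varepsilon$. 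Wait — this loses $n$, which is too much; the correct normalization must already be built into how $w_{=n}$ counts, and indeed one checks the definitions are set up so that the degree-$n$ factor is precisely what is expected. I would therefore double-check the normalization in \eqref{eq_wnreal} versus \eqref{eq_wn*real} and present the inequality $\abs{P(\xi)} \le c\, H^{n-w^{*}}$ passing to $H \to \infty$ along the sequence where $\alpha$ exists, concluding $w_{=n}(\xi) \ge w^{*}$ and hence $w_{=n}(\xi) \ge w_{=n}^{*}(\xi)$. The main obstacle is exactly this bookkeeping of the extra $H^{n-1}$ from the conjugates against the $H^{-1}$ in the definition of $w^{*}$ — making sure the exponent that survives is genuinely $w^{*}$ and not $w^{*}$ minus something — together with the harmless but necessary check that $P(\xi) \ne 0$ and that the constant $c(\xi,n)$ can be swallowed as $H \to \infty$.
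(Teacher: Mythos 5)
Your overall strategy (pass from $\alpha$ to its minimal polynomial $P$ and factor $P(\xi)=a_n\prod_i(\xi-\alpha_i)$) is the standard route — the paper itself does not write out an argument but simply cites \cite[p.~4]{Schleischitz17}, where essentially this argument is meant. However, as written your proof has a genuine quantitative gap, and you notice it yourself but do not repair it. Bounding $\abs{a_n}\le H$ and each conjugate factor $\abs{\xi-\alpha_i}\ll_{\xi,n} H$ separately gives only $\abs{P(\xi)}\ll H^{\,n-w^{*}}$, from which one can conclude $w_{=n}(\xi)\ge w^{*}-n$, not $w_{=n}(\xi)\ge w^{*}$. Your hope that ``the correct normalization must already be built into how $w_{=n}$ counts'' is unfounded: comparing \eqref{eq_wnreal} and \eqref{eq_wn*real}, the only normalizing factor present is the single $H(\alpha)^{-1}$ in the definition of $w^{*}_{=n}$, which compensates exactly one power of $H(P)$, not $n$ of them. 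The missing idea is to bound the leading coefficient and the conjugate product \emph{together} via the Mahler measure: $\prod_{i\ge 2}\abs{\xi-\alpha_i}\le (1+\abs{\xi})^{n-1}\prod_{i\ge 2}\max(1,\abs{\alpha_i})$ and $\abs{a_n}\prod_i\max(1,\abs{\alpha_i})=M(P)\le\sqrt{n+1}\,H(P)$, so that $\abs{P(\xi)}\le c(n,\xi)\,H(P)\,\abs{\xi-\alpha}$. With $H(P)=H(\alpha)$ this yields $\abs{P(\xi)}\le c(n,\xi)\,H(\alpha)\cdot H(\alpha)^{-1}H^{-w^{*}}=c(n,\xi)H^{-w^{*}}$, and the constant is absorbed by letting $H\to\infty$, giving $w_{=n}(\xi)\ge w^{*}$ for every $w^{*}<w_{=n}^{*}(\xi)$, i.e.\ the lemma, with $P$ irreducible of degree exactly $n$ and height at most $H$ as required.

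Two smaller points. First, your treatment of $P(\xi)\neq 0$ is muddled: $P(\xi)=0$ can only happen if $\xi$ is itself a root of $P$, i.e.\ a conjugate of $\alpha$; since then $\alpha$ ranges over the finitely many conjugates of $\xi$, all of the same fixed height, such $\alpha$ cannot satisfy the system in \eqref{eq_wn*real} for arbitrarily large $H$ with positive exponent, so these $\alpha$ may simply be discarded without affecting the supremum. Second, when you do carry out the corrected estimate, the conclusion $\abs{P(\xi)}\le H^{-w^{*}+\varepsilon}$ for all sufficiently large admissible $H$ is exactly the shape demanded by \eqref{eq_wnreal}, so no further bookkeeping is needed.
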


\begin{proof}
	See \cite[p.~4]{Schleischitz17}.
\end{proof}

We ready elementarily three lemmas on continued fractions (see e.g.\ \cite{Perron29} for a proof).

\begin{lem}\label{lem_cf_hikaku}
	Let $\xi = [a_0, a_1, a_2, \ldots]$ and $\zeta = [b_0, b_1, b_2, \ldots]$ be real numbers.
	Assume that there exist integer $k \geq 1$ such that $a_j = b_j$ for any $0 \leq j \leq k$.
	Then we have
	\begin{equation*}
		\abs{\xi - \zeta} \leq q_k ^{- 2},
	\end{equation*}
	where $(p_n/q_n)_{n \geq 0}$ is the convergent sequence of $\xi$.
\end{lem}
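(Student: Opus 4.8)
The plan is to represent both $\xi$ and $\zeta$ through their complete quotients and to observe that the shared initial partial quotients force both numbers into a single short interval. First I would note that, since $a_j = b_j$ for $0 \leq j \leq k$, the convergents $p_j / q_j$ with $0 \leq j \leq k$ depend only on $a_0, \ldots, a_k$, and hence coincide for $\xi$ and $\zeta$; in particular $p_{k - 1}, q_{k - 1}, p_k, q_k$ are common to both numbers.

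Next I would invoke the standard identity $\xi = (p_k \xi_{k + 1} + p_{k - 1}) / (q_k \xi_{k + 1} + q_{k - 1})$, where $\xi_{k + 1} = [a_{k + 1}, a_{k + 2}, \ldots]$ is the $(k + 1)$-th complete quotient and satisfies $\xi_{k + 1} \geq 1$, together with the analogous formula for $\zeta$ involving $\zeta_{k + 1} = [b_{k + 1}, b_{k + 2}, \ldots] \geq 1$. The key point is that, as the complete quotient $t$ ranges over $[1, + \infty)$, the map $t \mapsto (p_k t + p_{k - 1}) / (q_k t + q_{k - 1})$ is monotone (its parity-dependent sign being irrelevant) and sweeps out the interval with endpoints $p_k / q_k$ (the limit as $t \to + \infty$) and $(p_k + p_{k - 1}) / (q_k + q_{k - 1})$ (the value at $t = 1$). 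Since both $\xi_{k + 1} \geq 1$ and $\zeta_{k + 1} \geq 1$, the numbers $\xi$ and $\zeta$ both lie in this interval.

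Finally I would bound $\abs{\xi - \zeta}$ by the length of that interval. Using the determinant identity $p_k q_{k - 1} - p_{k - 1} q_k = (- 1) ^ {k - 1}$, a direct computation gives
\begin{equation*}
	\abs{\frac{p_k}{q_k} - \frac{p_k + p_{k - 1}}{q_k + q_{k - 1}}}
	= \frac{1}{q_k (q_k + q_{k - 1})}
	\leq q_k ^ {- 2},
\end{equation*}
where the last inequality uses $q_{k - 1} \geq 0$. Since $\xi$ and $\zeta$ both lie in an interval of this length, we conclude $\abs{\xi - \zeta} \leq q_k ^ {- 2}$, as claimed.

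The computation is entirely routine, so there is no genuine obstacle here; the only point deserving care is the verification that both complete quotients satisfy $\xi_{k + 1}, \zeta_{k + 1} \geq 1$. This is what guarantees that $\xi$ and $\zeta$ fall on the same monotone branch of the Möbius map and hence into the common interval of length $1 / (q_k (q_k + q_{k - 1}))$, from which the bound follows immediately.
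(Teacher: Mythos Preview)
Your argument is correct and is the standard route to this inequality. Note, however, that the paper does not supply its own proof of this lemma: it is one of three elementary facts about continued fractions stated without proof and attributed to Perron's book. There is thus nothing to compare against beyond observing that your proof is exactly the kind of routine verification the citation is meant to cover.
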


\begin{lem}\label{lem_hight_upper}
	Let $\xi$ be a quadratic real number with ultimately periodic continued fraction
	\begin{equation*}
		\xi = [0, a_1, \ldots, a_r, \overline{a_{r + 1}, \ldots, a_{r + s}}].
	\end{equation*}
	Then we have $H(\xi) \leq 2 q_r q_{r + s}$, where $(p_n / q_n)_{n \geq 0}$ is the convergent sequence of $\xi$.
\end{lem}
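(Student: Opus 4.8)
The plan is to realise $\xi$ as the fixed point of a fractional linear transformation built from the pre-period and the period, thereby producing an explicit integer quadratic polynomial vanishing at $\xi$, and then to bound its coefficients by $2 q_r q_{r+s}$. The reduction underlying this is standard: if $Q \in \lZ[X]$ is any quadratic polynomial with $Q(\xi) = 0$, then the primitive minimal polynomial $P$ of $\xi$ divides $Q$ in $\lZ[X]$ by Gauss's lemma, and since $\deg P = \deg Q = 2$ we get $Q = c P$ for some nonzero integer $c$; hence $H(\xi) = H(P) \leq H(Q)$. So it suffices to exhibit one integer quadratic $Q$ vanishing at $\xi$ all of whose coefficients are at most $2 q_r q_{r+s}$ in absolute value.

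To build $Q$, I would use the matrix encoding of convergents. Write $M_r = \left( \begin{smallmatrix} p_r & p_{r-1} \\ q_r & q_{r-1} \end{smallmatrix} \right)$ and let $\beta = [\overline{a_{r+1}, \ldots, a_{r+s}}]$ be the complete quotient at index $r+1$, so that $\xi = M_r \cdot \beta$ under the usual action by fractional linear transformations. The period relation $\beta = [a_{r+1}, \ldots, a_{r+s}, \beta]$ says that $\beta$ is fixed by $N$, where $M_{r+s} = M_r N$; consequently $\xi$ is fixed by $A := M_{r+s} M_r^{-1}$. Using $\det M_r = (-1)^{r+1}$ one computes $A$ explicitly, and the fixed-point equation $A_{21} \xi^2 + (A_{22} - A_{11})\xi - A_{12} = 0$ yields, after clearing the harmless global sign $(-1)^{r+1}$,
\begin{equation*}
	\begin{split}
		Q(X) = {} & (q_{r+s} q_{r-1} - q_{r+s-1} q_r) X^2 \\
		& + \bigl[ (q_{r+s-1} p_r + p_{r+s-1} q_r) - (q_{r+s} p_{r-1} + p_{r+s} q_{r-1}) \bigr] X \\
		& - (p_{r+s-1} p_r - p_{r+s} p_{r-1}),
	\end{split}
\end{equation*}
an integer polynomial with $Q(\xi) = 0$, non-degenerate since $\xi$ is irrational (so $A$ is non-scalar with $A_{21} \neq 0$).

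It then remains to bound the three coefficients, for which I would use two elementary facts: $(q_n)$ is nondecreasing, and $p_n \leq q_n$ for $n \geq 0$ (immediate by induction from the recurrences, using $a_0 = 0$). The leading coefficient is the difference of two nonnegative quantities each at most $q_r q_{r+s}$, hence has absolute value at most $q_r q_{r+s}$; the same reasoning bounds the constant term by $q_r q_{r+s}$. The delicate term is the coefficient of $X$: bounding its four monomials separately only gives $4 q_r q_{r+s}$, so the point is to keep the grouping displayed above. Each of the two parenthesised expressions is a sum of two nonnegative products, each product at most $q_r q_{r+s}$, so each sum is at most $2 q_r q_{r+s}$; being their difference, the coefficient of $X$ has absolute value at most $2 q_r q_{r+s}$. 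Taking the maximum over the three coefficients gives $H(Q) \leq 2 q_r q_{r+s}$, and the reduction step concludes.

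The main obstacle is entirely the constant: getting $2$ rather than $4$ in the middle coefficient forces one to exploit the precise cancellation-free grouping into two positive sums instead of bounding each of the four monomials individually. Everything else is routine continuant bookkeeping, together with the matrix computation of $A = M_{r+s} M_r^{-1}$.
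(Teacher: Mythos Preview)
Your argument is correct: the matrix identity $A = M_{r+s} M_r^{-1}$ produces an explicit integer quadratic vanishing at $\xi$, and your grouping of the middle coefficient as a difference of two nonnegative sums, each bounded by $2 q_r q_{r+s}$, is exactly what is needed to secure the constant $2$ rather than $4$. The only minor point worth flagging is the edge case $r = 0$, where $p_{-1} = 1 > 0 = q_{-1}$ so the blanket inequality $p_n \le q_n$ fails at $n = -1$; however, what you actually use is $p_{r-1} \le q_r$ and $q_{r-1} \le q_r$, and for $r = 0$ these read $1 \le 1$ and $0 \le 1$, so the bound still goes through.

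As for comparison with the paper: the paper does not give its own proof of this lemma at all. It is listed among three ``elementary lemmas on continued fractions'' with the parenthetical ``(see e.g.\ \cite{Perron29} for a proof)'' and no further argument. Your derivation is precisely the classical one (build the quadratic from the periodicity relation via $M_{r+s} M_r^{-1}$, then bound coefficients using $p_n \le q_n$ when $a_0 = 0$), so you have supplied what the paper outsources to Perron.
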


For positive integers $a_1, \ldots, a_n$, denote by $K_n (a_1, \ldots, a_n)$ the denominator of the rational number $[0, a_1, \ldots, a_n]$.
It is commonly called a \textit{continuant}.

\begin{lem}\label{lem_continua}
	For any positive integers $a_1, \ldots, a_n$ and any integer $k$ with $1 \leq k \leq n - 1$, we have
	\begin{align*}
		K_k (a_1, \ldots, a_k) & K_{n - k} (a_{k + 1}, \ldots, a_n) \\
		& \leq K_n (a_1, \ldots, a_n)
		\leq 2 K_k (a_1, \ldots, a_k) K_{n - k} (a_{k + 1}, \ldots, a_n), \\
		K_n (a_1, \ldots, a_n)
		& \leq (1 + \max \{ a_1, \ldots, a_n \}) ^ n.
	\end{align*}
\end{lem}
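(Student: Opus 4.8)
The plan is to deduce both parts of the lemma from two standard facts about continuants, which I would first record. The continuant satisfies the recurrence
\begin{equation*}
	K_j(a_1, \ldots, a_j) = a_j K_{j-1}(a_1, \ldots, a_{j-1}) + K_{j-2}(a_1, \ldots, a_{j-2}),
\end{equation*}
with the conventions $K_0 = 1$ and $K_{-1} = 0$, together with the reversal symmetry $K_j(a_1, \ldots, a_j) = K_j(a_j, \ldots, a_1)$ (both proved by a routine induction, as in \cite{Perron29}). Since every $a_i \geq 1$ and every continuant is a nonnegative integer, the recurrence gives at once the monotonicity $K_{j-1}(a_1, \ldots, a_{j-1}) \leq K_j(a_1, \ldots, a_j)$; combined with reversal symmetry, this also shows that deleting the \emph{first} argument cannot increase a continuant, that is, $K_{m-1}(c_2, \ldots, c_m) \leq K_m(c_1, \ldots, c_m)$.

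For the two-sided multiplicative estimate I would invoke Euler's splitting identity
\begin{equation*}
	K_n(a_1, \ldots, a_n) = K_k(a_1, \ldots, a_k) K_{n-k}(a_{k+1}, \ldots, a_n) + K_{k-1}(a_1, \ldots, a_{k-1}) K_{n-k-1}(a_{k+2}, \ldots, a_n),
\end{equation*}
which is itself proved by a short induction on $n$ using the recurrence above. Writing $A = K_k(a_1, \ldots, a_k)$ and $B = K_{n-k}(a_{k+1}, \ldots, a_n)$, and letting $C = K_{k-1}(a_1, \ldots, a_{k-1})$, $D = K_{n-k-1}(a_{k+2}, \ldots, a_n)$ denote the two smaller continuants on the right, the lower bound $K_n \geq AB$ is immediate since $C, D \geq 0$. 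For the upper bound, the monotonicity recorded above yields $C \leq A$ (here $C$ is $A$ with its last argument $a_k$ deleted) and $D \leq B$ (here $D$ is $B$ with its first argument $a_{k+1}$ deleted), whence $CD \leq AB$ and therefore $K_n = AB + CD \leq 2AB$.

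Finally, the bound $K_n(a_1, \ldots, a_n) \leq (1 + M)^n$ with $M = \max\{a_1, \ldots, a_n\}$ I would establish by induction on $n$ directly from the recurrence. The cases $n = 0, 1$ are clear, and assuming the claim for all smaller indices one computes
\begin{equation*}
	K_n \leq M(1+M)^{n-1} + (1+M)^{n-2} = (1+M)^{n-2}(M^2 + M + 1) \leq (1+M)^{n-2}(M+1)^2 = (1+M)^n,
\end{equation*}
the middle inequality being merely $M^2 + M + 1 \leq M^2 + 2M + 1$. None of these steps presents a genuine obstacle; the only point requiring care is the index bookkeeping in Euler's identity and in the reversal-symmetry argument, so that the correct argument is being deleted (the last one for $C$, the first one for $D$) when comparing the smaller continuants to $A$ and $B$.
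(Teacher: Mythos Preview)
Your argument is correct and is the standard way these continuant inequalities are established: Euler's splitting identity plus monotonicity for the two-sided multiplicative bound, and a direct induction from the recurrence for the $(1+M)^n$ bound. The paper itself does not give a proof of this lemma at all; it simply refers the reader to Perron \cite{Perron29}, so there is no in-paper argument to compare against. Your write-up is more than sufficient as a replacement, and the only comment is cosmetic: in the inductive step for the second inequality you silently use that the maximum of $a_1,\ldots,a_{n-1}$ (resp.\ $a_1,\ldots,a_{n-2}$) is at most $M$, which is of course trivially true but could be stated once for clarity.
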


\begin{lem}\label{lem_continua_upp_low}
	Let $n \geq 0$ be an integer and $b_0, \ldots, b_n, \lambda_0, \ldots, \lambda_n \geq 1$ be integers.
	We put $m(n) := \sum_{k = 0}^{n} \lambda_k$.
	Then we have
	\begin{equation*}
		\sum_{k = 0}^{n} \lambda_k \log b_k
		\leq \log K_{m(n)} (\overline{b_0} ^ {\lambda_0}, \ldots, \overline{b_n} ^ {\lambda_n})
		\leq 2 \sum_{k = 0}^{n} \lambda_k \log (b_k + 1).
	\end{equation*} 
\end{lem}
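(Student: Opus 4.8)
The plan is to derive both inequalities from Lemma~\ref{lem_continua} by decomposing the continuant $K_{m(n)}(\overline{b_0}^{\lambda_0},\ldots,\overline{b_n}^{\lambda_n})$ along the natural block structure coming from the $n+1$ strings $\overline{b_k}^{\lambda_k}$, $0\le k\le n$. For the lower bound I would iterate the left-hand inequality of Lemma~\ref{lem_continua}, splitting the continuant all the way down into its $m(n)$ individual letters, which gives
\[
	K_{m(n)}(\overline{b_0}^{\lambda_0},\ldots,\overline{b_n}^{\lambda_n})
	\geq \prod_{k=0}^{n} K_1(b_k)^{\lambda_k}
	= \prod_{k=0}^{n} b_k^{\lambda_k},
\]
and taking logarithms yields the asserted left inequality.

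For the upper bound I would instead iterate the right-hand inequality of Lemma~\ref{lem_continua}, peeling off one block $\overline{b_k}^{\lambda_k}$ at a time; since there are $n+1$ blocks this costs $n$ successive applications and hence produces an accumulated factor $2^{n}$:
\[
	K_{m(n)}(\overline{b_0}^{\lambda_0},\ldots,\overline{b_n}^{\lambda_n})
	\leq 2^{n}\prod_{k=0}^{n} K_{\lambda_k}(\overline{b_k}^{\lambda_k})
	\leq 2^{n}\prod_{k=0}^{n} (b_k + 1)^{\lambda_k},
\]
where the last step applies the second inequality of Lemma~\ref{lem_continua} to each block (the maximal partial quotient in block $k$ is $b_k$). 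Taking logarithms,
\[
	\log K_{m(n)}(\overline{b_0}^{\lambda_0},\ldots,\overline{b_n}^{\lambda_n})
	\leq n\log 2 + \sum_{k=0}^{n}\lambda_k\log(b_k+1).
\]
Because $b_k\geq 1$ and $\lambda_k\geq 1$, each summand satisfies $\lambda_k\log(b_k+1)\geq\log 2$, so $\sum_{k=0}^{n}\lambda_k\log(b_k+1)\geq(n+1)\log 2\geq n\log 2$; absorbing the term $n\log 2$ into the sum gives the claimed bound $2\sum_{k=0}^{n}\lambda_k\log(b_k+1)$.

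The computation is essentially routine; the only point that requires any attention is the bookkeeping of the factor $2^{n}$ generated by iterating the upper bound in Lemma~\ref{lem_continua}, together with the remark that the hypotheses $b_k,\lambda_k\geq 1$ force $\log(b_k+1)\geq\log 2$ for every $k$, which is precisely what is needed to swallow that factor into the right-hand side.
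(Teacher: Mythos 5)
Your proof is correct and follows essentially the same route as the paper: both bounds are obtained by splitting the continuant along the blocks $\overline{b_k}^{\lambda_k}$ via Lemma~\ref{lem_continua}, with the second inequality of that lemma applied blockwise and the accumulated $n\log 2$ absorbed into $\sum_{k=0}^{n}\lambda_k\log(b_k+1)$. You merely spell out the absorption step (each summand is at least $\log 2$) that the paper leaves implicit.
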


\begin{proof}
	By Lemma \ref{lem_continua}, we have
	\begin{equation*}
		\log K_{m(n)} (\overline{b_0} ^ {\lambda_0}, \ldots, \overline{b_n} ^ {\lambda_n})
		\geq \sum_{k = 0}^{n} \lambda_k \log K_1 (b_k)
		= \sum_{k = 0}^{n} \lambda_k \log b_k.
	\end{equation*}
	By Lemma \ref{lem_continua}, we also obtain
	\begin{align*}
		\log K_{m(n)} (\overline{b_0} ^ {\lambda_0}, \ldots, \overline{b_n} ^ {\lambda_n})
		& \leq n \log 2 + \sum_{k = 0}^{n} \log K_{\lambda_k} (\overline{b_k} ^ {\lambda_k}) \\
		& \leq n \log 2 + \sum_{k = 0}^{n} \lambda_k \log (b_k + 1) \\
		& \leq 2 \sum_{k = 0}^{n} \lambda_k \log (b_k + 1).
	\end{align*}
\end{proof}

\section{Proof of Main results}\label{sec_proofs}

\begin{proof}[Proof of Theorem \ref{thm_main}]
	For simplicity of notation, we put $\xi := \xi (\boldsymbol{b}, \boldsymbol{\lambda})$.
	Let $(p_n / q_n)_{n \geq 0}$ be the convergent sequence of $\xi$.
	For an integer $n \geq 0$, we put $m(n) := \sum_{k = 0}^{n} \lambda_k$.
	By the assumption, there exists an integer $N_0 \geq 0$ such that $b_n \geq 2$ for any integer $n \geq N_0$.
	Then, by Lemma \ref{lem_continua_upp_low}, we obtain
	\begin{align}
		\log q_{m(n)}
		& = \log K_{m(n)} (\overline{b_0} ^ {\lambda_0}, \ldots, \overline{b_n} ^ {\lambda_n})
		\leq 2 \sum_{k = 0}^{n} \lambda_k \log (b_k + 1) \nonumber \\
		& \leq 4 \sum_{k = 0}^{n} \lambda_k \log b_k + \sum_{k = 0}^{N_0} \lambda_k \log (4 / b_k) \label{eq_thm_w_1}
	\end{align}
	for any integer $n \geq N_0$.
	Therefore, it follows from Lemma \ref{lem_rat_app_koushiki} that
	\begin{align*}
		w_1 (\xi)
		& \geq 1 + \limsup_{n \rightarrow \infty} \frac{\log a_{m(n) + 1}}{\log q_{m(n)}} \\
		& \geq \limsup_{n \rightarrow \infty} \frac{\log b_{n + 1}}{4 \sum_{k = 0}^{n} \lambda_k \log b_k + \sum_{k = 0}^{N_0} \lambda_k \log (4 / b_k)}
		= + \infty.
	\end{align*}
	By \eqref{eq_w_1}, we obtain $w_{= 1} (\xi) = w_{= 1} ^ {*} (\xi) = + \infty$.
	For an integer $n \geq 0$, we consider a quadratic real number
	\begin{equation*}
		\alpha_n = [0, \overline{b_0} ^ {\lambda_0}, \ldots, \overline{b_n} ^ {\lambda_n}, \overline{b_{n + 1}}].
	\end{equation*}
	It follows from the assumption, \eqref{eq_thm_w_1}, Lemma \ref{lem_hight_upper} and \ref{lem_continua} that, for all sufficiently large $n$,
	\begin{align*}
		\log H(\alpha_n)
		& \leq \log (2 q_{m(n)} q_{m(n) + 1}) \\
		& \leq 2 \log K_{m(n)} (\overline{b_1} ^ {\lambda_1}, \ldots, \overline{b_n} ^ {\lambda_n}) + \log K_1 (b_{n + 1}) + \log 4 \\
		& \leq 8 \sum_{k = 0}^{n} \lambda_k \log b_k + \log b_{n + 1} + \log 4 + 2 \sum_{k = 0}^{N_0} \lambda_k \log (4 / b_k) \\
		& \leq 3 \log b_{n + 1}.
	\end{align*}
	By Lemma \ref{lem_cf_hikaku}, \ref{lem_continua}, and \ref{lem_continua_upp_low}, we obtain
	\begin{align*}
		- \log \abs{\xi - \alpha_n}
		& \geq 2 \log q_{m(n + 1)} \\
		& \geq 2 \log K_{\lambda_{n + 1}} (\overline{b_{n + 1}} ^ {\lambda_{n + 1}}) \\
		& \geq 2 \lambda_{n + 1} \log b_{n + 1} 
	\end{align*}
	for any integer $n \geq 0$.
	Therefore, we have
	\begin{equation*}
		0 < \abs{\xi - \alpha_n} \leq H(\alpha_n) ^ {- 2 \lambda_{n + 1} / 3}
	\end{equation*}
	for all sufficiently large $n$.
	Thus, by Lemma \ref{lem_new_exponent_hikaku}, we obtain $w_{= 2} (\xi) = w_{= 2} ^ {*} (\xi) = \infty$.
\end{proof}

\begin{proof}[Proof of Corollary \ref{cor_main}]
	Let $\boldsymbol{\varepsilon} = (\varepsilon_n)_{n \geq 0}$ and $\boldsymbol{\lambda} = (\lambda_n)_{n \geq 0}$ be integer sequences with $\varepsilon_n \in \{ 1, 2 \}$ and $\lambda_n = n$ for any integer $n \geq 0$.
	We define a sequence $\boldsymbol{b}(\boldsymbol{\varepsilon}) = (b(\boldsymbol{\varepsilon})_n)_{n \geq 0}$ by $b(\boldsymbol{\varepsilon})_0 = b(\boldsymbol{\varepsilon})_1 = 2$ and
	\begin{equation*}
		b(\boldsymbol{\varepsilon})_n = \prod_{k = 0}^{n - 1} b(\boldsymbol{\varepsilon})_k ^ {k n \varepsilon_n}
	\end{equation*}
	for any integer $n \geq 2$.
	It is easily seen that $(\boldsymbol{b}(\boldsymbol{\varepsilon}), \boldsymbol{\lambda})$ satisfies \eqref{eq_main_cond}.
	Therefore, it follows from Theorem \ref{thm_main} that $\xi (\boldsymbol{b}(\boldsymbol{\varepsilon}), \boldsymbol{\lambda})$ is a Liouville number and
	\begin{equation*}
		w_{= 2} (\xi (\boldsymbol{b}(\boldsymbol{\varepsilon}), \boldsymbol{\lambda}))
		= w_{= 2} ^ {*} (\xi (\boldsymbol{b}(\boldsymbol{\varepsilon}), \boldsymbol{\lambda}))
		= + \infty .
	\end{equation*}
	Let $\boldsymbol{\varepsilon}' = (\varepsilon_n')_{n \geq 0}$ be an integer sequence with $\boldsymbol{\varepsilon} \neq \boldsymbol{\varepsilon}'$ and $\varepsilon_n' \in \{ 1, 2 \}$ for any integer $n \geq 0$.
	We put an integer $N := \min \{ n \mid \varepsilon_n \neq \varepsilon_n' \}$ and
	\begin{gather*}
		\xi (\boldsymbol{b}(\boldsymbol{\varepsilon}), \boldsymbol{\lambda})
		=: [0, a_1(\boldsymbol{\varepsilon}), a_2(\boldsymbol{\varepsilon}), \ldots ], \\
		\xi (\boldsymbol{b}(\boldsymbol{\varepsilon}'), \boldsymbol{\lambda})
		=: [0, a_1(\boldsymbol{\varepsilon}'), a_2(\boldsymbol{\varepsilon}'), \ldots ].
	\end{gather*}
	For an integer $n \geq 0$, we put $m(n) := \sum_{k = 0}^{n} \lambda_k$.
	For a convenience, we put $m(- 1) := 0$.
	By $a_{m(N - 1) + 1} (\boldsymbol{\varepsilon}) \neq a_{m(N - 1) + 1} (\boldsymbol{\varepsilon}')$, we obtain $\xi (\boldsymbol{b}(\boldsymbol{\varepsilon}), \boldsymbol{\lambda}) \neq \xi (\boldsymbol{b}(\boldsymbol{\varepsilon}'), \boldsymbol{\lambda})$.
	Since uncountably many choices of such sequences $\boldsymbol{\varepsilon}$, the proof is complete.
\end{proof}

\end{document}